\documentclass[11pt, twoside]{article}   	
\usepackage{geometry}                		
\geometry{letterpaper}                   		
\usepackage{graphicx}				
\usepackage{amsfonts}
\usepackage{latexsym}
\usepackage{mathrsfs}
\usepackage{amssymb}
\usepackage{amsmath}
\usepackage{amsthm}
\usepackage{indentfirst}
\usepackage{color}
\usepackage{graphicx}



\pagestyle{myheadings}
\markboth{Zhishan Yang}{The ideal function in cubic fields}


\newtheorem{theorem}{\color{black}\indent Theorem}[section]
\newtheorem{lemma}{\color{black}\indent Lemma}[section]

\newtheorem{corollary}{\color{black}\indent Corollary}[section]

\frenchspacing

\textwidth=16.0cm
\textheight=23cm
\parindent=16pt
\oddsidemargin=0cm
\evensidemargin=0cm
\topmargin=-1.6cm


\title{THE IDEAL COUNTING FUNCTION IN CUBIC FIELDS
}
\author{Zhishan Yang\footnote{Email:~yangzs100@nenu.edu.cn\newline
Address: School of Mathematics and Statistics, Northeast Normal University, 
Changchun 130024, P.R. China}}

\date{}							

\begin{document}
\maketitle
\begin{abstract}
For a cubic algebraic extension $K$ of $\mathbb{Q}$, the behavior of the ideal counting function is considered in this paper. Let $a_{K}(n)$ be the number of integral ideals of the field $K$ with norm $n$.  An asymptotic formula is given for the sum
$$
\sum\limits_{n_{1}^2+n_{2}^2\leq x}a_{K}(n_{1}^2+n_{2}^2).
$$

\textbf{Keywords}: Non-normal extension; Ideal counting function; Rankin-Selberg convolution.

\end{abstract}

\section{Introduction}

Let $K$ be an algebraic extension of $\mathbb{Q}$ with degree $d$. The associated Dedekind zeta function is defined by
$$
\zeta_K(s)=\sum\limits_{\mathfrak{a}}\mathfrak{N}(\mathfrak{a})^{-s},\quad \Re s>1,
$$
where the sum runs over all integral ideals in $\mathcal{O}_K$, and $\mathfrak{N}(\mathfrak{a})$ is the norm of the integral ideal $\mathfrak{a}$. 
Since the norm of an integral ideal is a positive rational integer, the Dedekind zeta function can be rewritten as an ordinary Dirichlet series
$$
\zeta_K(s)=\sum\limits_{n=1}^{\infty}a_K(n)n^{-s},\quad \Re s>1,
$$
where $a_K(n)$ counts the number of integral ideals $\mathfrak{a}$ with norm $n$ in $K$, we call it the ideal counting function. A great number of deep arithmetic properties of a number field are hidden within its Dedekind zeta function.

It is known that the ideal counting function $a_K(n)$ is a multiplicative function, and it has the upper bound
$$
a_K(n)\ll \tau^d(n),
$$
where $\tau(n)$ is the divisor function, see~\cite{Chandrasekharan K}. 

Landau~\cite{Landau} in 1927 gave the average behavior of the ideal counting function
$$
\sum\limits_{n\leq x}a_K(n)=cx+O(x^{\frac{n-1}{n+1}+\varepsilon})
$$
for arbitrary algebraic number field of degree $d\geq2$.
Nowak~\cite{Nowak} then established the best estimation hitherto in any algebraic number field of degree $d\geq3$. By using the decomposition of prime number $p$  in $\mathcal{O}_{K}$ and the analytic properties of $L$-functions, in paper~\cite{Lv}, L\"{u} considered the average behavior of  moments of the ideal function
$$
\sum\limits_{n\leq x}a_K^l(n),\quad l=1,2,\cdots
$$
and gave a sharper estimates for $l=1$ in the Galois extension over $\mathbb{Q}$, while later  L\"{u} and the author~\cite{Yang}  gave a bound for the sum
$$
\sum\limits_{n\leq x}a_K^l(n^2),\quad l=1,2,\cdots
$$
in the Galois extension over $\mathbb{Q}$.

For a non-normal extension $K$ of $\mathbb{Q}$, the decomposition of $p$ in $\mathcal{O}_{K}$ is complex, so we can not use the same method as the normal extension. 
In 2008, by applying the so-called strong Artin conjecture, Fomenko~\cite{Fo} get 
the results
$$
\sum\limits_{n\leq x}a_K^l(n),\quad l=2,\,3,
$$
when $K$ is a non-normal cubic field extension. Later, L\"{u}~\cite{Lv1} revised the error term.

In this paper, 
the author will 
be interested in the estimation of the following sum
\begin{equation}
\label{the aim}
\sum_{n_1^2+n_2^2\leq x}a_K(n_1^2+n_2^2).
\end{equation}
where $K$ is the cubic algebraic extension of $\mathbb{Q}$.

For the purpose, we first consider the arithmetic function $r(n)$ which is the number of representation of an integer $n$ as sums of two square integers. $i.e.$
$$
r(n)=\#\{n\in\mathbb{Z}| n=n_1^2+n_2^2\}.
$$
Then, we can rewrite the formula~\eqref{the aim} as
\begin{equation}
\sum_{n_1^2+n_2^2\leq x}a_K(n_1^2+n_2^2)=\sum\limits_{n\leq x}a_K(n)\sum_{n=n_1^2+n_2^2}1=\sum\limits_{n\leq x}a_K(n)r(n).
\end{equation}

It is known that $r(n)$ is the ideal counting function of the Gaussian number field $\mathbb{Q}(\sqrt{-1})$ and we have
$$
r(n)=\sum\limits_{d|n}\chi^{\prime}(d),
$$
where $\chi^{\prime}$ is a real primitive Dirichlet character modulo $4$.

For general quadratic number field $L$ with discriminant $D^{\prime}$, the ideal counting function of the field $L$ is
$$
a_{L}(n)=\sum\limits_{d|n}\chi^{\prime}(d),
$$
where $\chi^{\prime}$ is a real primitive Dirichlet character modulo $\left|D^{\prime}\right|$. It is an interesting question to consider the sum
$$
\sum\limits_{n\leq x}a_K(n)a_{L}(n).
$$
Fomenko~\cite{Fo1} consider this convolution sum when both $K$ and $L$ are quadratic field. However, we shall discuss a more general case. Assume that $q\geq1$ is an integer, $\chi$ is a primitive character modulo $q$, define the function
$$
f_\chi(n)=\sum\limits_{k|n}\chi(k),
$$
then we have the following results
\begin{theorem}
\label{theorem1}
Let $K$ be a cubic normal extension of $\mathbb{Q}$ and $q\geq1$ is an integer,  $\chi$ a primitive Dirichlet character modulo $q$, then we have
\begin{equation}
\sum\limits_{n\leq x}a_K(n)f_\chi(n)=xP_{5}(\log x)+O(x^{5/8+\varepsilon}),
\end{equation}
where $P_{5}(t)$ is a polynomial in $t$ with degree $5$, and $\varepsilon>0 $ is an arbitrarily  small constant.  
\end{theorem}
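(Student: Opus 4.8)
The plan is to treat the Dirichlet series
\[
F(s)=\sum_{n=1}^{\infty}\frac{a_K(n)f_\chi(n)}{n^s}
\]
as a Rankin--Selberg convolution, obtain its meromorphic continuation by matching its Euler product against Hecke $L$-functions, and then recover the partial sums by Perron's formula. Since $K/\mathbb{Q}$ is normal of degree three, its Galois group is cyclic, so by class field theory $\zeta_K(s)=\zeta(s)L(s,\psi)L(s,\bar\psi)$ for a primitive cubic character $\psi$; equivalently $a_K=1*\psi*\bar\psi$. Similarly $\sum_n f_\chi(n)n^{-s}=\zeta(s)L(s,\chi)$, i.e. $f_\chi=1*\chi$. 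As $a_K$ and $f_\chi$ are multiplicative, so is $a_K f_\chi$, and $F(s)$ has an Euler product, convergent for $\Re s>1$ by the bound $a_K(n)\ll\tau^3(n)$.

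First I would compute the local factor of $F$. Writing $\alpha(p)=(1,\psi(p),\bar\psi(p))$ and $\beta(p)=(1,\chi(p))$, we have $a_K(p^k)=h_k(\alpha(p))$ and $f_\chi(p^k)=h_k(\beta(p))$, where $h_k$ is the complete homogeneous symmetric polynomial, so the $p$-factor of $F$ is $\sum_{k\ge0}h_k(\alpha(p))h_k(\beta(p))p^{-ks}$. The classical identity for such a Hadamard product evaluates this as a rational function of $p^{-s}$ with denominator $\prod_{i,j}(1-\alpha_i(p)\beta_j(p)p^{-s})$ and numerator $1+O(p^{-2s})$ (the linear term cancels because $h_1(\alpha)h_1(\beta)=\sum_{i,j}\alpha_i\beta_j$). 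Taking the product over all primes yields
\[
F(s)=\zeta(s)L(s,\chi)L(s,\psi)L(s,\psi\chi)L(s,\bar\psi)L(s,\bar\psi\chi)\,U(s),
\]
where $U(s)$ is an Euler product that converges absolutely and is bounded in $\Re s>\tfrac12$. This exhibits $F$ as essentially a degree-six $L$-function and gives its continuation past $\Re s=1$.

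Next I would extract the main term. Every factor $L(s,\cdot)$ attached to a nonprincipal character is entire and nonzero at $s=1$, whereas each factor reducing to $\zeta(s)$ contributes a pole there; thus $F$ has a single pole, at $s=1$, whose order equals the number of the six twists $1,\chi,\psi,\psi\chi,\bar\psi,\bar\psi\chi$ that are principal. Applying Perron's formula and moving the contour across this pole, the residue produces the main term $xP(\log x)$, with $\deg P$ one less than the pole order, as asserted in the theorem; the value $U(1)$ together with the leading Laurent coefficients and special values of the surviving $L$-functions determine the coefficients of $P$.

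The remaining, and principal, difficulty is the error term. After truncating Perron's formula at height $T$ and shifting to a line $\Re s=\sigma_0$ with $\tfrac12<\sigma_0<1$, the horizontal and vertical contributions must be estimated using the functional equations of the constituent $L$-functions together with convexity (Phragm\'en--Lindel\"of) and subconvexity bounds, and a mean-value (fourth-moment) estimate to control the vertical integral of the degree-six product, all while keeping $U(s)$ under control in the strip $\tfrac12<\Re s\le1$. Optimizing $T$ and $\sigma_0$ against $x$ should deliver the exponent $5/8+\varepsilon$. I expect this balancing---extracting from a degree-six convolution a power saving as strong as $x^{5/8}$---to be the hard part, since it is exactly where the sharpest available bounds for the relevant $L$-functions on the critical line are needed.
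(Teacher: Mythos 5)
Your route is the same as the paper's: the identical factorization $L_{K,f_\chi}(s)=\zeta(s)L(s,\varphi)L(s,\overline{\varphi})L(s,\chi)L(s,\varphi\times\chi)L(s,\overline{\varphi}\times\chi)\,U_1(s)$ with $U_1$ absolutely convergent for $\Re s>1/2$, then truncated Perron and a contour shift to $\Re s=\tfrac12+\varepsilon$. But where the paper does its real work you stop: your final paragraph (``optimizing $T$ and $\sigma_0$ should deliver the exponent $5/8+\varepsilon$'') is a plan, not a proof, and the exponent does not come out of generic convexity--plus--moments bookkeeping. A pure convexity treatment of the degree-six product gives $(1+|t|)^{3(1-\sigma)+\varepsilon}$, which after optimization is far too weak. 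The paper's actual inputs are: on the vertical line, the Heath-Brown subconvexity bound $L(\tfrac12+\varepsilon+it)\ll (1+|t|)^{1/6+\varepsilon}$ applied pointwise to the two convolution factors (each is a degree-one object, essentially $L(s,\varphi\chi)$), combined via H\"older with the fourth-moment estimates $\int_{T_1}^{2T_1}|\zeta(\tfrac12+\varepsilon+it)|^4dt\ll T_1^{1+\varepsilon}$ and the same for $L(\cdot,\varphi)$, $L(\cdot,\overline{\varphi})$, $L(\cdot,\chi)$, yielding $\int_1^T|M_{K,f_\chi}(\tfrac12+\varepsilon+it)|t^{-1}dt\ll T^{1/3+\varepsilon}$, hence a vertical contribution $\ll x^{1/2+\varepsilon}T^{1/3+\varepsilon}$; horizontal segments $\ll x^{1+\varepsilon}/T+x^{1/2+\varepsilon}$ from the $(1/3)(1-\sigma)$ bounds on all six factors; and then $T=x^{3/8}$ balances $x^{1/2}T^{1/3}$ against $x/T$ to give $x^{5/8+\varepsilon}$. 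Until you commit to exactly such inputs and run the balance, the theorem's error term is not established; this is the genuine gap.

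There is also an internal non sequitur in your main-term step, and it is worth flagging because your version is actually the careful one. You correctly state that the pole order at $s=1$ equals the number of principal characters among the six twists $1,\chi,\psi,\psi\chi,\overline{\psi},\overline{\psi}\chi$ --- but then you write ``as asserted in the theorem.'' Since $\psi$ has conductor $f>1$, the twists $\psi,\overline{\psi}$ are never principal, and at most two of the six can be (namely when $q=1$, or when $\chi\in\{\psi,\overline{\psi}\}$); generically only $\zeta(s)$ contributes, the pole is simple, and your residue computation produces $cx$, not $xP_5(\log x)$. So your (correct) criterion does not yield the degree-$5$ polynomial you claim. The paper reaches order $6$ only by asserting that ``$\zeta(s)$ and each of the Dirichlet $L$-functions has a simple pole at $s=1$,'' which is false for non-principal characters; if you follow your own criterion honestly, the shape of the main term changes, and you should say so rather than paper over the mismatch.
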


\begin{theorem}
\label{thmnonnormal}
Let $K$ be a cubic non normal extension of $\mathbb{Q}$ and $q\geq1$ is an integer,  $\chi$ a primitive Dirichlet character modulo $q$, then we have
\begin{equation}
\sum\limits_{n\leq x}a_K(n)f_\chi(n)=xP_{3}(\log x)+O(x^{5/8+\varepsilon}),
\end{equation}
where $P_{3}(t)$ is a polynomial  in $t$ with degree $3$, and $\varepsilon>0$ is an arbitrarily  small constant.  
\end{theorem}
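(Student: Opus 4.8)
The plan is to run the same Rankin--Selberg and contour-integration skeleton that proves Theorem~\ref{theorem1}, the one genuinely new feature being the arithmetic of a non-normal cubic field. Here the Galois closure $\tilde K/\mathbb{Q}$ has group $S_3$, so $\zeta_K(s)=\zeta(s)L(s,\rho)$ with $\rho$ the two-dimensional irreducible representation of $S_3$. Since $\rho$ is monomial, $\rho=\mathrm{Ind}_F^{\mathbb{Q}}\xi$ for the quadratic resolvent field $F$ and a ray-class character $\xi$, so $L(s,\rho)=L(s,g)$ is the $L$-function of a cuspidal (weight-one dihedral) form $g$ on $\mathrm{GL}_2$; this is the strong Artin conjecture input used by Fomenko~\cite{Fo}, and in the $S_3$ case it is a theorem of Hecke. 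Consequently $a_K=1\ast\lambda_g$, where $\lambda_g$ are the Hecke coefficients of $g$. Thus the three degree-one factors of the normal case are replaced by one degree-one and one genuinely degree-two factor, which is exactly what changes the structure of the main term and concentrates the analytic difficulty.

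First I would form $D(s)=\sum_{n\ge1}a_K(n)f_\chi(n)\,n^{-s}$. Both $a_K$ and $f_\chi$ are multiplicative, so $D(s)$ has an Euler product whose factor at an unramified prime $p$ is the diagonal generating function $\sum_{k\ge0}a_K(p^k)f_\chi(p^k)X^k$ built from the Satake data $\{1,\alpha_p,\beta_p\}$ of $\zeta_K$ and $\{1,\chi(p)\}$ of $\zeta(s)L(s,\chi)$. Using the rational-function identity for such diagonal sums, I would factor
\[
D(s)=\zeta(s)\,L(s,\chi)\,L(s,g)\,L(s,g\otimes\chi)\,H(s),
\]
where the local factors of $H(s)$ are $1+O(p^{-2s})$, so $H(s)$ converges absolutely and is bounded and non-vanishing for $\Re s>\tfrac12$; the finitely many ramified primes of $K$ and primes dividing $q$ only modify finitely many Euler factors and hence perturb the eventual constants. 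The main term $xP_3(\log x)$ is then produced by the principal part of this factorization at $s=1$: the order of the pole there fixes the degree of $P_3$, with the logarithmic powers supplied by $\zeta(s)$ and the surviving principal-character contributions.

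With the analytic continuation and pole data in place, the asymptotic formula would follow from a truncated Perron formula for $D(s)$, after which I would move the line of integration to the left of $\Re s=1$ and estimate the horizontal and vertical integrals. The required input is the growth on vertical lines of the $L$-functions above: convexity or subconvexity bounds for the $\mathrm{GL}_2$ factors $L(s,g)$ and $L(s,g\otimes\chi)$, classical mean-value (large-sieve and fourth-moment) estimates for $\zeta(s)L(s,\chi)$, and the absolute convergence of $H(s)$ in $\Re s>\tfrac12$. Balancing the truncation error of Perron's formula against the size of the shifted contour integral is what produces the exponent $5/8+\varepsilon$.

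I expect the main obstacle to be precisely this last estimation. Unlike the normal case, where every non-trivial factor is a degree-one Dirichlet $L$-function for which strong hybrid and moment bounds are classical, here $L(s,g)$ and especially $L(s,g\otimes\chi)$ are genuine cuspidal $\mathrm{GL}_2$ $L$-functions, and one must control them throughout the critical strip and feed in the sharpest available mean-value estimates to reach $5/8$ rather than a weaker exponent. A secondary, more routine, difficulty is to verify the factorization through $H(s)$ uniformly in $q$ and over the ramified primes, and to record the standard analytic properties (functional equation, polynomial vertical growth) of $L(s,g)$ and $L(s,g\otimes\chi)$ that the Hecke/dihedral description guarantees.
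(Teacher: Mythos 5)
Your proposal follows the paper's proof essentially step for step: the same factorization $L_{K,\,f_{\chi}}(s)=\zeta(s)L(s,\chi)L(s,f)L(s,f\otimes\chi)\cdot U(s)$ with $U(s)$ absolutely convergent for $\Re s>1/2$, the same truncated Perron formula with a contour shift to $\Re s=1/2+\varepsilon$, the main term read off from the pole at $s=1$, and the same analytic inputs (Good's subconvexity and mean square for $L(s,f)$, Jutila's moment, and the degree-two second-moment bound for $L(s,f\otimes\chi)$) balanced at $T=x^{3/8}$ to yield $O(x^{5/8+\varepsilon})$. The only divergence is in provenance, not substance: you justify the key modularity input $\zeta_K(s)=\zeta(s)L(s,g)$ via Hecke's theory of dihedral forms (the representation being monomial), whereas the paper cites Kim's strong Artin result for $S_3$ together with Deligne--Serre, and both you (with your holomorphic weight-one form) and the paper (explicitly) restrict to the odd case $D<0$.
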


According to the theorems above, we obtain
\begin{corollary}
Let $K$ be a cubic normal extension of $\mathbb{Q}$, and $r(n)$ the number of representation of an integer $n$ as sums of two square integers.  Then we have
$$
\sum\limits_{n\leq x}a_K(n)r(n)=xP_{5}(\log x)+O(x^{5/8+\varepsilon}),
$$
where $P_{5}(t)$ is a polynomial  in $t$ with degree $5$.
\end{corollary}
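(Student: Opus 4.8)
The plan is to read the corollary as the special case $q=4$ of Theorem~\ref{theorem1}. Recall from the introduction that $r(n)$, the representation function treated here as the ideal counting function of $\mathbb{Q}(\sqrt{-1})$, admits the divisor-sum expression $r(n)=\sum_{d\mid n}\chi'(d)$, where $\chi'$ is the real primitive Dirichlet character modulo $4$. In the notation of the paper this is exactly $r(n)=f_{\chi'}(n)$. Consequently
$$
\sum_{n\leq x}a_K(n)r(n)=\sum_{n\leq x}a_K(n)f_{\chi'}(n),
$$
so the entire statement reduces to applying the general convolution estimate already in hand.

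Next I would verify that the hypotheses of Theorem~\ref{theorem1} are genuinely met for this choice. The character $\chi'$ modulo $4$ is the unique non-principal character mod $4$, given by $\chi'(n)=\left(\frac{-1}{n}\right)$ for odd $n$, and it is primitive of conductor exactly $4$; hence we may legitimately take $q=4$ and $\chi=\chi'$ in the theorem. Since $K$ is assumed to be a cubic \emph{normal} extension of $\mathbb{Q}$, Theorem~\ref{theorem1} applies in its degree-$5$ form and yields directly
$$
\sum_{n\leq x}a_K(n)f_{\chi'}(n)=xP_5(\log x)+O\bigl(x^{5/8+\varepsilon}\bigr),
$$
with $P_5$ a polynomial of degree $5$. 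Combining the two displays gives the asserted asymptotic, the polynomial $P_5$ in the corollary being precisely the one produced by Theorem~\ref{theorem1} at $q=4$.

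At the level of the corollary there is essentially no obstacle: all of the analytic substance is discharged once and for all in the proof of Theorem~\ref{theorem1} — the factorization of the Dirichlet series $\sum_{n}a_K(n)f_\chi(n)n^{-s}$ as a product of $\zeta_K$ twisted by $L(s,\chi)$, the determination of the order of the pole at $s=1$ (which fixes $\deg P_5=5$ and reflects the splitting type of $\zeta_K$ for a cubic normal field), and the contour and convexity arguments that produce the error term $O(x^{5/8+\varepsilon})$. The only item meriting a moment's care is bookkeeping: the conductor $q=4$ is a fixed absolute constant, so every implied constant in Theorem~\ref{theorem1} absorbs the $q$-dependence without altering the stated error exponent. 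This is immediate, and the corollary follows.
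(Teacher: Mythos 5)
Your proposal is correct and matches the paper's own (implicit) argument: the paper derives the corollary directly from Theorem~\ref{theorem1} by identifying $r(n)=\sum_{d\mid n}\chi'(d)=f_{\chi'}(n)$ with $\chi'$ the primitive character modulo $4$ and specializing $q=4$, exactly as you do. Your added checks (primitivity of $\chi'$, absorption of the fixed conductor into the implied constants) are sound and only make explicit what the paper leaves unsaid.
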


\begin{corollary}
Let $K$ be a cubic non normal extension of $\mathbb{Q}$, and $r(n)$ the number of representation of an integer $n$ as sums of two square integers.  Then we have
$$
\sum\limits_{n\leq x}a_K(n)r(n)=xP_{3}(\log x)+O(x^{5/8+\varepsilon}),
$$
where $P_{3}(t)$ is a polynomial in $t$ with degree $3$.
\end{corollary}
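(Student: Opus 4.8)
The plan is to obtain the corollary as a direct specialization of Theorem~\ref{thmnonnormal}. The essential point, already isolated in the introduction, is that the counting function $r(n)$ is itself of the form $f_\chi(n)$ for a specific character. Writing $\chi'$ for the real primitive Dirichlet character modulo $4$, the introduction records the identity
$$
r(n)=\sum_{d\mid n}\chi'(d)=f_{\chi'}(n),
$$
which expresses $r$ as the divisor-type sum attached to $\chi'$. Thus the sum in question is literally an instance of the sum treated in Theorem~\ref{thmnonnormal}, namely the one with modulus $q=4$ and character $\chi=\chi'$.

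First I would verify that this choice meets the hypotheses of Theorem~\ref{thmnonnormal}. The theorem asks only that $q\geq1$ be an integer and that $\chi$ be a primitive Dirichlet character modulo $q$; both conditions hold here, since $4\geq1$ and the nontrivial real character modulo $4$ is primitive, its conductor being exactly $4$. No further arithmetic restriction on $q$ or on $\chi$ is imposed, so the specialization is legitimate.

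With the identification $r=f_{\chi'}$ in hand, applying Theorem~\ref{thmnonnormal} with $q=4$ and $\chi=\chi'$ yields
$$
\sum_{n\leq x}a_K(n)r(n)=\sum_{n\leq x}a_K(n)f_{\chi'}(n)=xP_3(\log x)+O\!\left(x^{5/8+\varepsilon}\right),
$$
with $P_3$ a polynomial of degree $3$ and $\varepsilon>0$ arbitrary. Both the shape of the main term and the exponent $5/8+\varepsilon$ in the error are inherited verbatim from the theorem; in particular the degree of $P_3$ is dictated by the order of the pole at $s=1$ of the Dirichlet series generating $a_K(n)f_{\chi'}(n)$, a feature of the non-normal cubic case that is established once and for all in the proof of Theorem~\ref{thmnonnormal}.

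Because the corollary is a pure specialization, there is no substantive analytic obstacle beyond the work already carried out for the theorem: the Rankin--Selberg analysis, the contour shift, and the residue computation all live in the proof of Theorem~\ref{thmnonnormal}. The only genuine point to confirm is the bookkeeping step above---that $\chi'$ is primitive of conductor $4$, so that the hypotheses are met exactly and the coefficients of $P_3$ are precisely those produced by the residue computation at the modulus $q=4$. Once this is checked, the conclusion is immediate.
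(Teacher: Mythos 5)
Your proposal is correct and matches the paper's own derivation exactly: the paper obtains this corollary as an immediate specialization of Theorem~\ref{thmnonnormal}, using the identity $r(n)=\sum_{d\mid n}\chi'(d)$ with $\chi'$ the real primitive character modulo $4$, which is precisely your choice $q=4$, $\chi=\chi'$. Your added check that $\chi'$ is primitive of conductor $4$ is the right (and only) hypothesis to verify.
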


\bigskip
Assume that $K$ and $L$ are Galois extensions of $\mathbb{Q}$  with degree $d_{1}$, $d_{2}$, respectively. According to the theory of Artin $L$-functions, the ideal counting functions $a_{K}(n)$ and $a_{L}(n)$ can be represented by the sum of characters of the representations of $Gal(K/\mathbb{Q})$ and $Gal(L/\mathbb{Q})$, respectively.
%

\section{Preliminaries}

Let $K$ be a cubic algebraic extension of $\mathbb{Q}$, and $D=df^{2}$($d$ squarefree) its discriminant;  the Dedekind zeta function of $K$ is 
$$
\zeta_{K}(s)=\sum_{n=1}^{\infty}a_{K}(n)n^{-s},\quad \text{for} ~\Re s>1.
$$
It has the Euler product
$$
\zeta_{K}(s)=\prod_{p}\left(1+\frac{a_{K}(p)}{p^{s}}+\frac{a_{K}(p^{2})}{p^{2s}}+\cdots \right).
$$
We will give some results about Dedekind zeta function of cubic field $K$ in the following.
\begin{lemma}
\label{lemmazetaKnormal}
$K$ is a normal extension if and only if $D=f^{2}$. In this case
              $$
              \zeta_{K}(s)=\zeta(s)L(s,\varphi)L(s,\overline{\varphi}),
              $$
              where $\zeta(s)$ is the Riemann zeta function and $L(s,\varphi)$ is an ordinary 
              Dirichlet series (over $\mathbb{Q}$) corresponding to a primitive character $\varphi
              $ modulo $f$.
\end{lemma}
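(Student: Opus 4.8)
The plan is to treat the two assertions separately: the arithmetic criterion $D = f^2$ first, then the factorization of $\zeta_K(s)$.

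For the criterion, I would pass to the Galois closure $\widetilde{K}$ and study $G = \mathrm{Gal}(\widetilde{K}/\mathbb{Q})$. Since $[K:\mathbb{Q}] = 3$, the group $G$ is a transitive subgroup of $S_3$, hence either the cyclic group $A_3 \cong \mathbb{Z}/3\mathbb{Z}$ or all of $S_3$, and $K$ is normal exactly when $\widetilde{K} = K$, i.e. when $G = A_3$. The classical input is that $G \subseteq A_3$ if and only if $D$ is a perfect square: writing $K = \mathbb{Q}(\theta)$, the square root $\delta = \prod_{i<j}(\theta_i - \theta_j)$ of the polynomial discriminant satisfies $\sigma\delta = \mathrm{sgn}(\sigma)\,\delta$, so $\delta$ is rational (equivalently the discriminant is a perfect square, a property insensitive to the square index factor $[\mathcal{O}_K : \mathbb{Z}[\theta]]^2$) precisely when $G$ consists of even permutations. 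Writing $D = d f^2$ with $d$ squarefree, the integer $D$ is a perfect square if and only if $d = 1$, that is $D = f^2$; this yields the stated equivalence.

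For the factorization, assume $K$ is normal, so that $G = \mathbb{Z}/3\mathbb{Z}$ is abelian with character group $\widehat{G} = \{1, \varphi, \overline{\varphi}\}$, where $\overline{\varphi} = \varphi^2$ is the complex conjugate of the nontrivial character $\varphi$; all irreducible representations are one-dimensional. I would then invoke Artin's factorization of the Dedekind zeta function,
$$\zeta_K(s) = \prod_{\psi \in \widehat{G}} L(s, \psi, K/\mathbb{Q}) = \zeta(s)\, L(s, \varphi, K/\mathbb{Q})\, L(s, \overline{\varphi}, K/\mathbb{Q}),$$
in which the trivial character contributes $L(s, 1, K/\mathbb{Q}) = \zeta(s)$. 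Since $G$ is abelian, Artin reciprocity over $\mathbb{Q}$ (equivalently the Kronecker--Weber theorem) identifies each nontrivial Artin $L$-function $L(s, \varphi, K/\mathbb{Q})$ with a Dirichlet $L$-function $L(s, \varphi)$ attached to a primitive Dirichlet character $\varphi$. This step carries the genuine content, being where the arithmetic of abelian extensions enters.

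It remains to pin down the modulus of $\varphi$, for which I would apply the conductor--discriminant formula $|D| = \prod_{\psi \in \widehat{G}} \mathfrak{f}(\psi)$, with $\mathfrak{f}(\psi)$ the conductor of $\psi$. As $\mathfrak{f}(1) = 1$ and conjugate characters share a conductor, this gives $|D| = \mathfrak{f}(\varphi)^2$, and comparison with $D = f^2$ forces $\mathfrak{f}(\varphi) = f$, so $\varphi$ is primitive modulo $f$, completing the proof. The main obstacle throughout is precisely the appeal to class field theory needed to realize the abelian Artin $L$-functions as ordinary Dirichlet series; the remaining steps are routine bookkeeping with the representation theory of $\mathbb{Z}/3\mathbb{Z}$ and the conductor--discriminant relation.
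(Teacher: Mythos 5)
Your proposal is correct and complete. Note that the paper itself offers no argument for this lemma at all: its ``proof'' is a one-line citation to M\"uller \cite{Wolfgang}, whose lemma (resting ultimately on class field theory for cubic fields, in the tradition of Hasse) is exactly what you have reconstructed. Your route --- reducing the normality criterion to whether the Galois closure has group $A_{3}$ or $S_{3}$ via the sign action on $\delta=\prod_{i<j}(\theta_{i}-\theta_{j})$, with the correct remark that squareness of the discriminant is unaffected by the index factor $[\mathcal{O}_K:\mathbb{Z}[\theta]]^{2}$; then factoring $\zeta_{K}(s)=\prod_{\psi\in\widehat{G}}L(s,\psi,K/\mathbb{Q})$ for the cyclic group $G\cong\mathbb{Z}/3\mathbb{Z}$, converting the nontrivial Artin $L$-functions into Dirichlet $L$-functions by Artin reciprocity (Kronecker--Weber), and pinning down the modulus by the conductor--discriminant formula $|D|=\mathfrak{f}(\varphi)\mathfrak{f}(\overline{\varphi})=\mathfrak{f}(\varphi)^{2}$, whence $\mathfrak{f}(\varphi)=f$ --- is the standard proof underlying the cited reference, and every step you give is sound. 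In effect you have supplied the self-contained argument that the paper outsources, so there is nothing to correct; the only observation worth adding is that since a cyclic cubic field is totally real, $D>0$ holds automatically in the normal case, consistent with $D=f^{2}$.
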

\begin{proof}
See the lemma in ~\cite{Wolfgang}.
\end{proof}

By using lemma~\ref{lemmazetaKnormal}, the Euler product of Riemann zeta function $\zeta(s)$ and the Dirichlet $L$-functions,  we have
\begin{lemma}
\label{aKnormal}
Assume that $a_{K}(n)$ is the ideal counting function of the cubic normal extension $K$ over $\mathbb{Q}$, we get
$$
a_{K}(n)=\sum\limits_{xy|n}\varphi(x)\overline{\varphi}(y),
$$
Here $x$ and $y$ are integers. In particular, when $n=p$ is a prime, we get
\begin{equation}
\label{aKp}
a_{K}(p)=1+\varphi(p)+\overline{\varphi}(p),
\end{equation}
where $\varphi$ is a primitive character modulo $f$.
\end{lemma}

\bigskip
Assume that $K$ is a non-normal cubic extension over $\mathbb{Q}$, which  is given by an irreducible polynomial $f(x)=x^{3}+ax^{2}+bx+c$. Let $E$ denote the normal closure of $K$ 
that is normal over $\mathbb{Q}$ with degree $6$, 
and denoted the Galois group $\text{Gal}(E/\mathbb{Q})=S_{3}$.  Firstly, we will introduce some properties about $S_{3}$(See~\cite{Frohlich}, pp. 226-227 for detailed arguments). 

The elements of $S_{3}$ fall into three conjugacy classes
$$
C_{1}:~(1);\quad C_{2}:~(1,2,3),~(3,2,1);\quad C_{3}:~(1,2),~(2,3),~(1,3).
$$
with the following three simple characters: the one dimensional characters $\psi_{1}$(the principal character) and $\psi_{2}$(the other character determined by the subgroup $C_{1}\cup 
C_{2}$), and the two dimensional character $\psi_{3}$.

Let $D$ be the discriminant of $f(x)=x^{3}+ax^{2}+bx+c$ and $K_{2}=\mathbb{Q}(\sqrt{D})$. The fields $K_{2}$ and $K$ are the intermediate extensions fixed under the subgroups $A_{3}$ and $\{(1),(1,2)\}$, respectively. The extensions $K_{2}/\mathbb{Q}$, $E/K_{2}$ and $E/K$ are abelian. The Dedekind zeta function satisfy the relations
$$
\begin{array}{llll}
      \zeta_{E}(s)&=&L_{\psi_{1}}L_{\psi_{2}}L_{\psi_{3}}^{2},\\
\zeta_{K_{2}}(s)&=&L_{\psi_{1}}L_{\psi_{2}},\\
      \zeta_{K}(s)&=&L_{\psi_{1}}L_{\psi_{3}},\\
             \zeta(s)&=&L_{\psi_{1}},
\end{array}
$$
where 
$$
L_{\psi_{2}}=L(s,\psi_{2},E/\mathbb{Q})\quad\text{and}\quad L_{\psi_{3}}=L(s,\psi_{3},E/\mathbb{Q}),
$$
and $L_{\psi_{2}}=L(s,\psi_{2},E/\mathbb{Q})$ and $L_{\psi_{3}}=L(s,\psi_{3},E/\mathbb{Q})$ are Artin $L$-functions.

Kim in \cite{Kim} proved that the the strong Artin conjecture holds true for the group $S_{3}$. By using the strong Artin conjecture, the function $L_{\psi_{3}}$ also can be interpreted in 
another way~\cite{Deligne}. Let $\rho: S_{3}\rightarrow GL_{2}(\mathbb{C})$ be the irreducible two-dimensional representation. Then $\rho$ gives rise to a cuspidal representation $\pi$ 
of $\text{GL}_{2}(\mathbb{A}_{\mathbb{Q}})$. Let
$$
L(s,\pi)=\sum_{n=1}^{\infty}M(n)n^{-s}.
$$
Below we assume that $\rho$ is odd, $i.e.$ $D<0$, then $L(s,\pi)=L(s,f)$, where $f$ is a holomorphic cusp form of weight 1 with respect to the congruence group $\Gamma_{0}(|D|)$,
$$
f(z)=\sum_{n=1}^{\infty}M(n)e^{2\pi inz}.
$$
Here as usual, $L(s,\pi)$ denotes the $L$-function of the representation $\pi$, and $L(s,f)$ denotes the Hecke $L$-function of cusp form $f$. Thus $L_{\psi_{3}}=L(s,f)$ and 
\begin{equation}
\label{zetaKL}
\zeta_{K}(s)=\zeta(s)L(s,f).
\end{equation}

The formula \eqref{zetaKL} implies that 
\begin{lemma}
\label{nonaKn}
The symbols defined as above. we have
$$
a_{K}(n)=\sum_{d|n}M(d).
$$
In particular,
$$
a_{K}(p)=1+M(p),
$$
where $p$ is a prime integer.
\end{lemma}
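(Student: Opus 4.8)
The plan is to read off the coefficient identity directly from the factorization \eqref{zetaKL}, so the proof amounts to a Dirichlet convolution together with uniqueness of Dirichlet coefficients. First I would expand each factor on the right-hand side as an ordinary Dirichlet series in the half-plane of absolute convergence $\Re s>1$: the Riemann zeta function is $\zeta(s)=\sum_{m=1}^{\infty}m^{-s}$, all of whose coefficients equal $1$, while by definition $L(s,f)=\sum_{k=1}^{\infty}M(k)k^{-s}$.

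Next I would multiply the two series. Since both converge absolutely for $\Re s>1$, their product may be rearranged into a single Dirichlet series whose $n$-th coefficient is the Dirichlet convolution of the two coefficient sequences:
$$
\zeta(s)L(s,f)=\sum_{m=1}^{\infty}\sum_{k=1}^{\infty}\frac{M(k)}{(mk)^{s}}=\sum_{n=1}^{\infty}\left(\sum_{d|n}M(d)\right)n^{-s}.
$$
Comparing this with $\zeta_{K}(s)=\sum_{n=1}^{\infty}a_{K}(n)n^{-s}$ and invoking the uniqueness of the coefficients of an absolutely convergent Dirichlet series then yields $a_{K}(n)=\sum_{d|n}M(d)$.

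For the prime case, I would specialize $n=p$: the only divisors are $1$ and $p$, so that $a_{K}(p)=M(1)+M(p)$. Since $f$ is the normalized weight-one newform, we have $M(1)=1$ (equivalently, $L(s,f)$ has leading coefficient $1$), which gives $a_{K}(p)=1+M(p)$.

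The calculation itself is entirely routine; the only point requiring care is the justification that \eqref{zetaKL} holds as an identity of Dirichlet series, and this has already been established in the preceding discussion via Kim's proof of the strong Artin conjecture for $S_{3}$ and Deligne's interpretation of $L_{\psi_{3}}$ as the Hecke $L$-function $L(s,f)$. Thus the genuine difficulty lies not in this lemma but in the prior identification $\zeta_{K}(s)=\zeta(s)L(s,f)$, which I take as given.
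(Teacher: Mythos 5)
Your proposal is correct and is essentially the paper's own argument: the paper simply states that the factorization $\zeta_{K}(s)=\zeta(s)L(s,f)$ in \eqref{zetaKL} implies the lemma, and your Dirichlet-convolution computation with uniqueness of coefficients (plus $M(1)=1$ for the prime case) is exactly the routine verification left implicit there. You also correctly locate the real content in the prior identification $L_{\psi_{3}}=L(s,f)$ via the strong Artin conjecture, which the paper likewise treats as established beforehand.
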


\bigskip
To prove the theorem, we also need some well-known estimates of the relative $L$-functions in the following.

For subconvexity bounds, we have the following well-known estimates.
\begin{lemma}
For any $\varepsilon>0$, we have
\begin{equation}
\zeta(\sigma+it)\ll_{\varepsilon}(1+|t|)^{(1/3)(1-\sigma)+\varepsilon}
\end{equation}
uniformly for $1/2\leq\sigma\leq1$, and $|t|\geq 1$.
\end{lemma}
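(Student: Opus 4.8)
The plan is to reduce the whole strip $1/2\le\sigma\le1$ to its two boundary lines and then interpolate. On the line $\sigma=1$ partial summation against the trivial bound gives $\zeta(1+it)\ll\log|t|\ll|t|^{\varepsilon}$, which matches the claimed exponent $(1/3)(1-\sigma)=0$ at $\sigma=1$. Since the target exponent $(1/3)(1-\sigma)$ is linear in $\sigma$ and equals $1/6$ at $\sigma=1/2$, the real content of the lemma is the Weyl bound on the critical line, $\zeta(1/2+it)\ll_{\varepsilon}|t|^{1/6+\varepsilon}$. Once both endpoints are established, the Phragm\'en--Lindel\"of convexity principle applied to $\zeta(s)$ in the strip (after multiplying by $(s-1)$ to remove the pole at $s=1$ and dividing back, which is harmless for $|t|\ge1$) interpolates the bounding exponent linearly and yields $\zeta(\sigma+it)\ll_{\varepsilon}|t|^{(1/3)(1-\sigma)+\varepsilon}$ uniformly.

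For the critical-line bound I would start from the approximate functional equation, which represents $\zeta(1/2+it)$ up to an admissible error as a short Dirichlet polynomial
$$
\zeta(1/2+it)=\sum_{n\le N}n^{-1/2-it}+\chi(1/2+it)\sum_{n\le N}n^{-1/2+it}+O(|t|^{-1/4}),
$$
with $N\asymp(|t|/2\pi)^{1/2}$ and $|\chi(1/2+it)|=1$. By partial summation it then suffices to bound the exponential sum $\sum_{M<n\le2M}e\bigl(-\tfrac{t}{2\pi}\log n\bigr)$ for dyadic $M\le N$. Writing the phase as $f(n)=-\tfrac{t}{2\pi}\log n$, one has $f'''(n)\asymp|t|/M^{3}$, and van der Corput's third-derivative test gives
$$
\sum_{M<n\le2M}e(f(n))\ll M\Bigl(\frac{|t|}{M^{3}}\Bigr)^{1/6}+M^{1/2}\Bigl(\frac{|t|}{M^{3}}\Bigr)^{-1/6}.
$$
Reinstating the factor $n^{-1/2}$ and summing over the $O(\log|t|)$ dyadic ranges $M\le N\asymp|t|^{1/2}$, the dominant range $M\asymp|t|^{1/2}$ produces the bound $|t|^{1/6}$, which is precisely Weyl's estimate.

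The hard part will be the exponential-sum estimate, that is, the application of van der Corput's method; equivalently one invokes the exponent pair $(1/6,2/3)$ obtained from one $A$-process followed by one $B$-process starting from the trivial pair. This is exactly the input that upgrades the convexity exponent $1/4$ — which the second-derivative test alone delivers — to the Weyl exponent $1/6$. Everything else (the approximate functional equation, the partial summation, and the Phragm\'en--Lindel\"of interpolation) is routine and uniform in $t$, and the final $\varepsilon$ comfortably absorbs the logarithmic losses from the dyadic decomposition and from the estimate on $\sigma=1$.
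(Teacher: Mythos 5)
Your argument is correct, but note that the paper does not actually prove this lemma: its entire ``proof'' is a citation of Theorem II.3.6 in Tenenbaum's book. What you have written out is essentially the standard argument lying behind that citation, and the details check out. At the endpoint $\sigma=1$ the bound $\zeta(1+it)\ll\log|t|$ is classical; on the critical line, with $N\asymp(|t|/2\pi)^{1/2}$ in the approximate functional equation, the third-derivative test with $\lambda_3\asymp|t|/M^3$ gives, after reinstating the weight $n^{-1/2}\asymp M^{-1/2}$, a block contribution $\ll|t|^{1/6}+M^{1/2}|t|^{-1/6}$, and since $M\le N\asymp|t|^{1/2}$ the second term is $\le|t|^{1/12}$; summing the $O(\log|t|)$ dyadic blocks yields Weyl's $|t|^{1/6+\varepsilon}$, and your identification of this with the exponent pair $(1/6,2/3)$ (A-process, i.e.\ Weyl differencing, reducing to sums handled by the B-process) is the right operational description. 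Your Phragm\'en--Lindel\"of step is also sound, with the small caveat that multiplying by $s-1$ raises both boundary exponents by $1$, so you should either divide by $s+1$ as well (making the correction factor of modulus $\asymp1$ for $|t|\ge1$) or invoke the Rademacher form of the principle with line-dependent exponents; either repair is routine. The one structural difference from the cited source is that Tenenbaum does not interpolate from the two boundary lines: he works directly at general $\sigma$ with the truncated Euler--Maclaurin approximation $\zeta(s)=\sum_{n\le N}n^{-s}+N^{1-s}/(s-1)+O(N^{-\sigma})$, $N\asymp|t|$, estimating dyadic blocks with the second- and third-derivative tests to get $\zeta(\sigma+it)\ll|t|^{(1-\sigma)/3}\log|t|$ uniformly with no convexity step. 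Your endpoint-plus-interpolation route is shorter to state and buys you the result from the single hard input (the Weyl bound), while the direct route gives the uniformity in $\sigma$ for free and a clean $\log$ rather than $|t|^{\varepsilon}$ loss; both deliver exactly the exponent $(1/3)(1-\sigma)$ the paper needs.
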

\begin{proof}
See theorem II 3.6 in the book~\cite{Tenenbaum}.
\end{proof}

For the Dirichlet $L$-series,  By using the Phragmen-Lindel\"of principle for a strip~\cite{Iwaniec} and the estimates given by Heath-Brown~\cite{HeathB}, we have the similar results
\begin{equation}
\label{Lschi}
L(\sigma+it, \chi)\ll_{\varepsilon}(1+|t|)^{(1/3)(1-\sigma)+\varepsilon}
\end{equation}
uniformly for $1/2\leq\sigma\leq1$, and $|t|\geq 1$, where $\chi$ is a Dirichlet character modulo $q$, and $q$ is an integer.

\bigskip
For the mean values of the relative $L$-functions, we have

\begin{lemma}
\label{zetaonehalf}
For any $\varepsilon>0$, we have
\begin{equation}
\int\limits_{1}^{T}\left|\zeta(\frac{1}{2}+it)\right|^{A}\ll_{\varepsilon}T^{1+\varepsilon}
\end{equation}
uniformly for $T\geq 1$, where $A=2, 4$.
\end{lemma}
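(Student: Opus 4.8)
The plan is to treat both moments through the approximate functional equation for $\zeta(s)$ and for $\zeta(s)^2$, reducing each integral to a mean value of a Dirichlet polynomial over $[1,T]$. These are the classical second and fourth moment estimates of Hardy--Littlewood and of Ingham; in fact sharp asymptotics ($T\log T$ and a constant times $T\log^4 T$) are known, so for the present purpose only the upper bound $\ll T^{1+\varepsilon}$ is needed, and one could simply cite the standard references (e.g. Titchmarsh, \emph{The Theory of the Riemann Zeta-function}). I nonetheless outline the direct argument.

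For $A=2$ I would start from the approximate functional equation
$$
\zeta\bigl(\tfrac12+it\bigr)=\sum_{n\le X}n^{-1/2-it}+g(t)\sum_{n\le X}n^{-1/2+it}+O(t^{-1/4}),
$$
with $X\asymp\sqrt{t/2\pi}$ and a factor $g(t)$ coming from the functional equation satisfying $|g(t)|=1$. Squaring, integrating over $t\in[1,T]$, and expanding, the problem reduces to the elementary mean value
$$
\int_1^T\Bigl|\sum_{n\le N}a_n n^{-it}\Bigr|^2\,dt=T\sum_{n\le N}|a_n|^2+O\Bigl(\sum_{m\ne n}\frac{|a_m a_n|}{|\log(m/n)|}\Bigr),
$$
where the off-diagonal is controlled by $\int_1^T(m/n)^{it}\,dt\ll|\log(m/n)|^{-1}$. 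Here the diagonal contributes $T\sum_{n\le\sqrt T}n^{-1}\ll T\log T$, and the off-diagonal terms are of lower order, which already gives the claimed bound for $A=2$.

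For $A=4$ I would write $|\zeta(\tfrac12+it)|^4=|\zeta(\tfrac12+it)^2|^2$ and invoke the approximate functional equation for $\zeta(s)^2$, whose Dirichlet coefficients are the divisor function $\tau(n)$ and whose effective length is now $X\asymp t/2\pi$. The same mean value identity then reduces the integral to the diagonal sum $T\sum_{n\le T}\tau(n)^2 n^{-1}$. Since $\sum_{n\le Y}\tau(n)^2 n^{-1}\ll(\log Y)^4$, the diagonal is $\ll T\log^4 T\ll T^{1+\varepsilon}$, as required.

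The main obstacle is precisely the fourth moment. Because the polynomial now has length comparable to $t$ rather than $\sqrt t$, the diagonal error term $O(n)$ that is harmless in the second moment becomes of size $T$, i.e. as large as the main term, so it cannot be discarded trivially. To keep the off-diagonal contribution under control I would truncate the divisor sum, group the pairs $(m,n)$ by the size of $|m-n|$, and exploit both the bound $|\log(m/n)|^{-1}\ll mn/|m-n|$ and standard estimates for $\sum_{n}\tau(n)$ in short intervals. For the crude exponent $1+\varepsilon$ all these pieces can be absorbed into the error, whereas recovering Ingham's sharp constant would require the more delicate analysis; since only the upper bound is needed here, the coarse accounting suffices.
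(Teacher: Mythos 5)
Your proposal is correct and matches the paper's stance: the paper states this lemma without proof, treating it as the classical second and fourth moment estimates of Hardy--Littlewood and Ingham, which is exactly what you identify (and for which citing Titchmarsh suffices). Your sketch via the approximate functional equations for $\zeta(s)$ and $\zeta(s)^2$ together with the mean value theorem for Dirichlet polynomials is the standard proof of those classical results, and your accounting of the diagonal and off-diagonal terms is sound for the crude bound $\ll T^{1+\varepsilon}$.
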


\begin{lemma}
\label{Lonehalf}
For any $\varepsilon>0$, and $q$ is an integer, $\chi$ is a character modulo $q$.  We have
\begin{equation}
\int\limits_{1}^{T}\left|L(\frac{1}{2}+it,\ \chi)\right|^{A}\ll_{\varepsilon}T^{1+\varepsilon}
\end{equation}
uniformly for $T\geq 1$, where $A=2, 4$.
\end{lemma}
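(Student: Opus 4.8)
The plan is to obtain both moment bounds from the approximate functional equation for $L(s,\chi)$ together with the Montgomery--Vaughan mean value theorem for Dirichlet polynomials; since the modulus $q$ is fixed in all our applications, any dependence on $q$ may be absorbed into the implied constant or into the factor $T^{\varepsilon}$. First I would record the approximate functional equation on the critical line: for $s=\tfrac12+it$ with $1\le t\le T$ there is a representation
$$
L\!\left(\tfrac12+it,\chi\right)=\sum_{n\le N}\frac{\chi(n)}{n^{1/2+it}}+X(s)\sum_{n\le N'}\frac{\overline{\chi}(n)}{n^{1/2-it}}+O\!\left(t^{-\delta}\right),
$$
where $X(s)$ is the usual root-number/gamma factor with $|X(s)|=1$, $N,N'\asymp\sqrt{qt/(2\pi)}$, and $\delta>0$. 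Thus, up to a negligible error, $L(\tfrac12+it,\chi)$ is controlled by a Dirichlet polynomial of length $\ll\sqrt{qT}$ with coefficients $\chi(n)n^{-1/2}$.

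To deal with the growth of the length in $t$ I would split $[1,T]$ into $O(\log T)$ dyadic blocks $[T',2T']$, on each of which the relevant length is essentially constant, $\asymp\sqrt{qT'}$. For $A=2$ the Montgomery--Vaughan inequality gives
$$
\int_{T'}^{2T'}\Big|\sum_{n\le N}a_n n^{-it}\Big|^{2}\,dt=\sum_{n\le N}|a_n|^{2}\bigl(T'+O(n)\bigr),
$$
so with $a_n=\chi(n)n^{-1/2}$ and $N\ll\sqrt{qT'}$ the right-hand side is $\ll T'\log(qT')+\sqrt{qT'}\ll_{\varepsilon}(T')^{1+\varepsilon}$; summing over the $O(\log T)$ dyadic blocks costs only a factor absorbed into $T^{\varepsilon}$.

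For $A=4$ I would write $|L|^{4}=|L^{2}|^{2}$ and apply the approximate functional equation to $L(s,\chi)^{2}$, whose Dirichlet coefficients are the twisted divisor function $d_\chi(n)=\sum_{ab=n}\chi(a)\chi(b)$ satisfying $|d_\chi(n)|\le\tau(n)\ll n^{\varepsilon}$. On a dyadic block this realizes $L(\tfrac12+it,\chi)^{2}$, up to negligible error, as a Dirichlet polynomial of length $\ll qT'$ with coefficients $d_\chi(n)n^{-1/2}$, and Montgomery--Vaughan then yields
$$
\int_{T'}^{2T'}\bigl|L(\tfrac12+it,\chi)\bigr|^{4}\,dt\ll\sum_{n\le M}\frac{|d_\chi(n)|^{2}}{n}\bigl(T'+O(n)\bigr)\ll_{\varepsilon}(T')^{1+\varepsilon},\qquad M\ll qT',
$$
and again the dyadic summation contributes only $T^{\varepsilon}$.

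The hard part will be the bookkeeping in the fourth-moment case: after applying the approximate functional equation to $L(s,\chi)^{2}$ one must verify that the cross term carrying the factor $X(s)$ and all off-diagonal contributions are genuinely of lower order, and that the error term, once raised to the relevant power and integrated, remains $\ll T^{\varepsilon}$. These estimates are routine once the dyadic decomposition has frozen the polynomial length, but they are the only places demanding real care; everything else reduces to the classical divisor-sum bounds $\sum_{n\le M}\tau(n)^{2}/n\ll(\log M)^{4}$ and $\sum_{n\le M}\tau(n)^{2}\ll M(\log M)^{3}$, both of which are $\ll_{\varepsilon}(T')^{1+\varepsilon}$ once $q$ is treated as fixed.
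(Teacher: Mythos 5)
Your proposal is essentially correct, but note that the paper gives no proof of this lemma at all: it is stated as a well-known estimate (the analogue for $\zeta$ in Lemma 2.5 is likewise unproved), the intended references being the classical second- and fourth-moment theorems for Dirichlet $L$-functions. What you have written is a reconstruction of the standard textbook proof of those theorems --- approximate functional equation plus the Montgomery--Vaughan mean value theorem, exactly as in Titchmarsh's treatment of $\int_1^T|\zeta(\tfrac12+it)|^4\,dt$ --- so in that sense you are supplying the argument the paper outsources. Three small points deserve attention. First, the lemma as stated allows $\chi$ to be imprimitive, while the approximate functional equation is for primitive characters; this is harmless since $L(s,\chi)=L(s,\chi^{*})\prod_{p\mid q}\bigl(1-\chi^{*}(p)p^{-s}\bigr)$ and the finite Euler factors are $O(1)$ on the critical line with $q$ fixed, but it should be said. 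Second, in the balanced approximate functional equation for $L(s,\chi)^2$ at $\sigma=\tfrac12$ the error term is of size $O(\log t)$ (or $O(t^{\varepsilon})$), not $O(t^{-\delta})$; this does not hurt you, since its square integrates to $O(T\log^2 T)\ll T^{1+\varepsilon}$, but your claim of a decaying error is stronger than what the standard references give. Third, the step you flag as ``the hard part'' --- the cross terms carrying $X(s)$ --- is actually trivial: since $|X(s)|=1$, the inequality $|a+b+c|^{2}\ll|a|^{2}+|b|^{2}+|c|^{2}$ bounds everything by the two diagonal Dirichlet-polynomial mean values you have already estimated, with no need to show the cross terms are of lower order. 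With those adjustments your argument is complete and, because the polynomial lengths $\sqrt{qT'}$ and $qT'$ are $\ll_{q}T'$, the Montgomery--Vaughan off-diagonal terms $\sum_{n\le M}\tau(n)^{2}\ll M\log^{3}M$ are indeed admissible, exactly as you computed.
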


For Hecke $L$-functions defined in the formula \eqref{zetaKL}, we have

\begin{lemma}
For any $\varepsilon>0$, we have
\begin{equation}
\begin{array}{cccc}
\int\limits_{1}^{T}\left|L\left(\frac{1}{2}+it,\ f\right)\right|^{2}dt&\sim& CT\log T\\
\int\limits_{1}^{T}\left|L\left(\frac{1}{2}+it,\ f\right)\right|^{6}dt&\ll    &  T^{2+\varepsilon}
\end{array}
\end{equation}
uniformly for $T\geq 1$, and the subconvexity bound
$$
L(\sigma+it,\ f)\ll_{t,\ \varepsilon}(1+|t|)^{\max \{(2/3)(1-\sigma),\ 0\}+\varepsilon}
$$
uniformly for $1/2\leq \sigma\leq 2$ and $|t|\geq 1$.
\end{lemma}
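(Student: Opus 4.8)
The plan is to treat the three assertions separately, since they rest on different inputs: the pointwise subconvexity bound and the second-moment asymptotic are classical, whereas the sixth-moment bound is the genuinely delicate point.

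For the subconvexity bound I would argue by interpolation. Since $f$ is a holomorphic cusp form, $L(s,f)$ is entire and satisfies a functional equation of degree two whose analytic conductor is $\asymp(1+|t|)^2$. On the line $\sigma=1$ a standard argument (absolute convergence just to the right of $1$, the functional equation, and the Phragmén--Lindel\"of principle, cf. the derivation of \eqref{Lschi}) gives $L(1+it,f)\ll(1+|t|)^{\varepsilon}$, while at the central point the Weyl-type subconvexity bound $L(\tfrac12+it,f)\ll(1+|t|)^{1/3+\varepsilon}$ holds for holomorphic cusp forms. Applying Phragmén--Lindel\"of to the strip $\tfrac12\le\sigma\le1$ interpolates linearly between the exponent $1/3$ at $\sigma=\tfrac12$ and the exponent $0$ at $\sigma=1$, and this linear function is exactly $(2/3)(1-\sigma)$; for $1\le\sigma\le2$ the bound $\ll(1+|t|)^{\varepsilon}$ persists, which accounts for the $\max\{\,\cdot\,,0\}$ in the stated exponent.

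For the second moment I would insert the approximate functional equation, writing $L(\tfrac12+it,f)$ as a Dirichlet polynomial of length $\asymp(1+|t|)$ in the Hecke eigenvalues $M(n)$ plus a dual sum of the same shape. Opening the square and integrating over $t\in[1,T]$, the diagonal $n=m$ contributes $\asymp T\sum_{n\le T}|M(n)|^2/n$. Since the Rankin--Selberg convolution $L(s,f\times f)=\zeta(s)L(s,\mathrm{sym}^2 f)$ has a simple pole at $s=1$, one has $\sum_{n\le x}|M(n)|^2\sim c_f x$, whence $\sum_{n\le T}|M(n)|^2/n\sim c_f\log T$, and the diagonal yields the main term $\sim CT\log T$. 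The off-diagonal terms $n\ne m$ reduce to exponential integrals (equivalently, to the attached shifted convolution sums) and contribute a lower-order amount; this is precisely Good's theorem on the mean square of cusp-form $L$-functions, which I would cite.

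The sixth moment is the main obstacle, and here I expect a soft interpolation resting on a hard moment estimate rather than an ad hoc computation. The naive combination of the second moment with the pointwise bound only gives $\int_1^T|L(\tfrac12+it,f)|^6\,dt\le\big(\sup_{t\le T}|L(\tfrac12+it,f)|\big)^4\int_1^T|L(\tfrac12+it,f)|^2\,dt\ll T^{4/3+\varepsilon}\cdot T^{1+\varepsilon}=T^{7/3+\varepsilon}$, which is too weak. To reach the exponent $2$ one needs a fourth-moment input: using $\int_1^T|L|^6\le\sup_{t\le T}|L|^2\int_1^T|L|^4$ together with the Weyl bound $\sup_{t\le T}|L(\tfrac12+it,f)|^2\ll T^{2/3+\varepsilon}$, a fourth-moment estimate $\int_1^T|L(\tfrac12+it,f)|^4\,dt\ll T^{4/3+\varepsilon}$ gives exactly $T^{2+\varepsilon}$, while the Lindel\"of-consistent bound $T^{1+\varepsilon}$, if invoked, yields even more. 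Establishing such a fourth moment is the deep ingredient: through the approximate functional equation it reduces to bounding the additive-divisor / shifted convolution sums $\sum_{n}M(n)\overline{M(n+h)}$ on average over the shift $h$, which is handled by the spectral theory of automorphic forms (the $\mathrm{GL}_2$ spectral large sieve). I would exploit the fact that $f$ is dihedral, so that $L(s,f)=L(s,\psi)$ is the Hecke $L$-function of a class-group character $\psi$ of the imaginary quadratic field $K_{2}=\mathbb{Q}(\sqrt{D})$; this realizes the coefficients as $M(n)=\sum_{\mathfrak{N}(\mathfrak{a})=n}\psi(\mathfrak{a})$ and lets the shifted sums be treated by lattice-point and divisor estimates in $K_2$. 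In practice the cleanest route, and the one I would adopt, is to cite the relevant $\mathrm{GL}_2$ moment estimate from the literature and feed it into the elementary interpolation above.
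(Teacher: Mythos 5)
Your handling of the first and third assertions is fine: the second-moment asymptotic and the Weyl-type bound $L(\tfrac12+it,f)\ll (1+|t|)^{1/3+\varepsilon}$ are both Good's theorem, which is exactly what the paper cites for these two claims (its entire proof is a citation of Good~\cite{Good} and Jutila~\cite{Jutila}), and your Phragm\'en--Lindel\"of interpolation between the exponents $1/3$ at $\sigma=\tfrac12$ and $\varepsilon$ at $\sigma=1$ is the standard way to obtain $(2/3)(1-\sigma)$.

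The genuine gap is in your sixth-moment argument. Your plan hinges on the fourth-moment estimate $\int_1^T|L(\tfrac12+it,f)|^4\,dt\ll T^{4/3+\varepsilon}$, which you correctly identify as ``the deep ingredient'' but propose to obtain by citing the literature or by spectral treatment of shifted convolutions. No such estimate is available: in the $t$-aspect the best unconditional fourth moment is $T^{3/2+\varepsilon}$, and it is obtained by Cauchy--Schwarz between the second moment and the \emph{sixth} moment, i.e.\ it is a consequence of the very bound you are trying to prove, so your reduction is circular. The dihedral structure of $f$ (it is indeed induced from a cubic ring class character of $\mathbb{Q}(\sqrt{D})$, $D<0$) does not repair this, since the fourth moment of $L(s,f)$ is still the mean square of a degree-four object, beyond what the $\mathrm{GL}_2$ spectral large sieve delivers. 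The actual proof, cited by the paper from Jutila's Tata lectures~\cite{Jutila}, proceeds by a different route: it is the cusp-form analogue of Heath-Brown's twelfth-moment bound for $\zeta$. One bounds the measure of the set where $|L(\tfrac12+it,f)|\geq V$ by $\ll T^{2+\varepsilon}V^{-6}$, using large-values estimates for Dirichlet polynomials combined with mean-square bounds in short intervals, the latter coming from transformation formulas of Voronoi--Wilton type for exponential sums weighted by the coefficients $M(n)$; integrating over $V$ yields $\int_1^T|L(\tfrac12+it,f)|^6\,dt\ll T^{2+\varepsilon}$ directly, with no fourth-moment input. (The numerology is parallel: degree $2$, Weyl exponent $1/3$, and $6\times\tfrac13=2$, just as $12\times\tfrac16=2$ for $\zeta$.) So for this part of the lemma you should simply cite Jutila, as the paper does, rather than interpolate through an unproven moment.
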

\begin{proof}
The first and third results due to Good~\cite{Good}, and the second result was proved by Jutila~\cite{Jutila}.
\end{proof}

We also have the convexity bounds for the relative $L$-functions.
\begin{lemma}
Let $L(s,\ g)$ be a Dirichlet series with Euler product of degree $m\geq 2$, which means
$$
L(s,\ g)=\sum\limits_{n=1}^{\infty}a_{g}(n)n^{-s}=\prod\limits_{p<\infty}\prod\limits_{j=1}^{m}\left(1-\frac{\alpha_{g}(p,\ j)}{p^{s}}\right),
$$
where $\alpha_{g}(p,\ j),\ j=1,\ 2,\ \cdots,\ m$ are the local parameters of $L(s,\ g)$ at prime $p$ and $a_{g}(n)\ll n^{\varepsilon}$. Assume that this series and its Euler product are absolutely convergent for $\Re (s)>1$. Also, assume that it is entire except possibly for simple poles at $s=0,\ 1$, and satisfies a functional equation of Riemann type. Then for $0\leq \sigma\leq 1$ and any $\varepsilon>0$, we have
\begin{equation}
L(\sigma+it,\ g)\ll_{g,\ \varepsilon}(1+|t|)^{(m/2)(1-\sigma)+\varepsilon},
\end{equation}
and for $T\geq 1$, we have
\begin{equation}
\label{Lg}
\int\limits_{T}^{2T}\left|L(1/2+\varepsilon+it, g)\right|^{2}dt\ll_{g,\varepsilon} T^{m/2+\varepsilon}.
\end{equation}
\end{lemma}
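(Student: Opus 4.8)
The plan is to treat the two estimates in turn, both being standard consequences of the assumed functional equation.

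For the pointwise convexity bound I would apply the Phragm\'en--Lindel\"of principle to $L(s,g)$ in the strip $-\varepsilon\le\Re s\le 1+\varepsilon$. On the right edge $\Re s=1+\varepsilon$, the absolute convergence of the Dirichlet series together with the hypothesis $a_g(n)\ll n^{\varepsilon}$ gives $L(s,g)\ll 1$ uniformly in $t$. On the left edge $\Re s=-\varepsilon$, I would use the functional equation of Riemann type,
\[
\Lambda(s,g)=Q^{s}\prod_{j=1}^{m}\Gamma\!\left(\frac{s+\kappa_j}{2}\right)L(s,g)=\epsilon\,\Lambda(1-s,\tilde g),
\]
to write $L(s,g)$ as the product of $L(1-s,\tilde g)$ with a ratio of $m$ pairs of Gamma factors. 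On this line $\Re(1-s)=1+\varepsilon$, so $L(1-s,\tilde g)\ll 1$, while Stirling's formula shows that each of the $m$ Gamma ratios contributes a factor $\asymp|t|^{(1-2\sigma)/2}$, the exponential parts cancelling, whence $L(-\varepsilon+it,g)\ll|t|^{m/2+\varepsilon}$. The possible simple poles at $s=0,1$ are removed by passing to $s(s-1)L(s,g)$, whose quadratic factor merely adds a constant to both boundary exponents and cancels when one divides back. Since the convexity exponent interpolates linearly, the three-lines theorem then yields $L(\sigma+it,g)\ll(1+|t|)^{(m/2)(1-\sigma)+\varepsilon}$ for $0\le\sigma\le1$.

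For the mean square I cannot use the Dirichlet series directly, since it does not converge on $\Re s=1/2+\varepsilon$; instead I would start from the approximate functional equation. With $s=1/2+\varepsilon+it$ the analytic conductor at height $t$ is $\asymp(1+|t|)^{m}$, so $L(s,g)$ is well approximated by a main Dirichlet polynomial $\sum_{n\le X}a_g(n)n^{-s}$ together with a dual polynomial of the same shape in $1-s$ and length $\asymp(1+|t|)^{m}/X$, plus a negligible error. Balancing the two lengths by $X\asymp(1+|t|)^{m/2}$, and noting that on $\Re s=1/2+\varepsilon$ the Gamma ratio weighting the dual sum has modulus $\ll|t|^{\varepsilon}$, both pieces become Dirichlet polynomials of length $N\asymp T^{m/2}$ with coefficients $b_n=a_g(n)n^{-1/2-\varepsilon}$ satisfying $|b_n|^{2}\ll n^{-1}$. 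The core step is then the Montgomery--Vaughan mean value theorem,
\[
\int_{T}^{2T}\Bigl|\sum_{n\le N}b_n n^{-it}\Bigr|^{2}dt\ll\sum_{n\le N}|b_n|^{2}(T+n).
\]
Inserting $|b_n|^{2}\ll n^{-1}$ and $N\asymp T^{m/2}$ gives $\ll T\log T+N\ll T\log T+T^{m/2}\ll T^{m/2+\varepsilon}$, the last step using $m\ge2$. Applying Cauchy--Schwarz to the cross terms between the main and dual polynomials and discarding the admissible error then produces $\int_{T}^{2T}|L(1/2+\varepsilon+it,g)|^{2}dt\ll T^{m/2+\varepsilon}$.

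The main obstacle, and the point demanding the most care, is the derivation of the approximate functional equation with the correct length and the verification that the Gamma ratio on the shifted line $\Re s=1/2+\varepsilon$ costs only $|t|^{\varepsilon}$: this is exactly where the degree $m$ enters, through the analytic conductor $(1+|t|)^{m}$, and is what converts the polynomial length $N\asymp T^{m/2}$ into the exponent $m/2$. The hypothesis $m\ge2$ is genuinely needed, since for $m=1$ the diagonal term $T\log T$ would dominate $T^{m/2}=T^{1/2}$ and the stated exponent would fail.
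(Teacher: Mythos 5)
Your proposal is correct in substance, but note that the paper itself does not prove this lemma at all: it simply cites Lemmas 2.1 and 2.2 of L\"u's paper \emph{Average behavior of Fourier coefficients of cusp forms} (Proc. Amer. Math. Soc. 137 (2009)). What you have written is essentially a reconstruction of the standard arguments behind those cited lemmas: Phragm\'en--Lindel\"of between $\Re s=-\varepsilon$ (where the functional equation and Stirling give $m$ gamma-ratio factors of size $\asymp|t|^{(1-2\sigma)/2}$ each) and $\Re s=1+\varepsilon$ for the convexity bound, and an approximate functional equation of balanced length $\asymp t^{m/2}$ combined with the Montgomery--Vaughan mean value theorem for the second moment, with the diagonal $T\log T$ absorbed into $T^{m/2+\varepsilon}$ precisely because $m\geq 2$. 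Two small points deserve care. First, in the approximate functional equation the cutoff length and the smooth weight $V\bigl(n/t^{m/2}\bigr)$ depend on $t$, so you cannot apply Montgomery--Vaughan verbatim to a polynomial with $t$-dependent coefficients; the routine fix is to remove the weight by Mellin inversion (or partial summation) and integrate over the Mellin variable before invoking the mean value theorem, and your write-up should acknowledge this step rather than treating the polynomial length as fixed across $[T,2T]$. Second, a trivial slip: the dual polynomial has coefficients $a_g(n)n^{-1/2+\varepsilon}$ rather than $a_g(n)n^{-1/2-\varepsilon}$, which is harmless since $a_g(n)\ll n^{\varepsilon}$ still yields $|b_n|^2\ll n^{-1+4\varepsilon}$ and costs only a factor $T^{\varepsilon}$. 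With those caveats, your argument is a complete and correct proof of the statement the paper merely quotes.
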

\begin{proof}
The first result is form the lemma 2.2 in \cite{Guangshi Lv}, and the second result is from the lemma 2.1 in \cite{Guangshi Lv}.
\end{proof}

\section{Proof of Theorems}
Assume that $K$ is a cubic extension of $\mathbb{Q}$. The Dedekind zeta function of $K$ is
$$
\zeta_{K}(s)=\sum_{n=1}^{\infty}a_K(n)n^{-s}, \quad \Re s>1.
$$
Its Euler product is 
$$
\zeta_{K}(s)=\prod_{p}\left(1+\frac{a_{K}(p)}{p^{s}}+\frac{a_{K}(p^{2})}{p^{2s}}+\cdots \right),\quad \Re s>1.
$$

Let $q$ be an integer, and $\chi$ a primitive Dirichlet character modulo $q$. Define the function
\begin{equation}
\label{fchin}
f_\chi(n)=\sum\limits_{k|n}\chi(k).
\end{equation}
It is easy to check that $f_{\chi}(m n)=f_{\chi}(m)f_{\chi}(n)$, when $(m,\,n)=1$.

Since $a_{K}(n)\ll n^{\varepsilon}$, so does $a_{K}(n)f_{\chi}(n)$. We can define an $L$-function associated to the function $a_{K}(n)f_{\chi}(n)$ in the half-plane $\Re s>1$,
\begin{equation}
L_{K,\,f_{\chi}}(s)=\sum_{n=1}^{\infty}a_{K}(n)f_{\chi}(n)n^{-s},
\end{equation}
which is absolutely convergent in this region. Both $a_{K}(n)$ and $f_{\chi}(n)$ are multiplicative function, then for $\Re s>1$, the function $L_{K,\,f_{\chi}}(s)$ can be expressed by the Euler product
$$
L_{K,\,f_{\chi}}(s)=\prod_{p}\left(1+\frac{a_{K}(p)f_{\chi}(p)}{p^{s}}+\frac{a_{K}(p^{2})f_{\chi}(p^{2})}{p^{2s}}+\cdots\right),
$$
where the product runs over all primes.

\subsection*{Proof of Theorem~\ref{theorem1}}
When $K$ is cubic normal extension, according to the formula~\eqref{aKp} and \eqref{fchin}, we get the formula
\begin{equation}
\label{aKpfchip}
a_{K}(p)f_{\chi}(p)=1+\varphi(p)+\overline{\varphi}(p)+\chi(p)+\varphi(p)\chi(p)+\overline{\varphi}(p)\chi(p),
\end{equation}
where $p$ is a nature prime number.

For $\Re s>1$, we can write 
$$
M_{K,\,f_{\chi}}(s):=\zeta(s)L(s,\,\varphi)L(s,\,\overline{\varphi})L(s,\,\chi)L(s,\,\varphi\times \chi)L(s,\,\overline{\varphi}\times \chi)
$$
as an Euler product of the form
$$
\prod_{p}\left(1+\frac{A(p)}{p^{s}}+\cdots\right),
$$
where $A(p)=1+\varphi(p)+\overline{\varphi}(p)+\chi(p)+\varphi(p)\chi(p)+\overline{\varphi}(p)\chi(p)$, and the function $L(s,\,\varphi\times \chi)$ and $L(s,\,\overline{\varphi}\times \chi)$ are the Rankin-Selberg convolution $L$-function of the Dirichlet $L$-functions $L(s,\,\varphi)$, $L(s,\,\overline{\varphi})$ with the Dirichlet $L$-functions $L(s,\,\chi)$ respectively. 

By comparing it with the Euler product of $L_{K,\,f_{\chi}}(s)$, and using the formula~\eqref{aKpfchip}, we obtain
\begin{equation}
L_{K,\,f_{\chi}}(s)=M_{K,\,f_{\chi}}(s)\cdot U_{1}(s),
\end{equation}
where $U_{1}(s)$ denotes a Dirichlet series, which is absolutely convergent for $\Re s>1/2$, and uniformly convergent for $\Re s>1/2+\varepsilon$. Therefore, the function $L_{K,\,f_{\chi}}(s)$ admits an analytic continuation into the half-plane $\sigma>1/2$, having as its only singularity a pole of order $6$ at $s=1$, because $\zeta(s)$ and each of the Dirichlet $L$-functions has a simple pole at $s=1$.

\smallskip
By using the well-known inversion formula for Dirichlet series, we obtain
$$
\sum_{n\leq x}a_{K}(n)f_{\chi}(n)=\frac{1}{2\pi i}\int_{b-iT}^{b+iT}L_{K,\,f_{\chi}}(s)\frac{x^{s}}{s}ds+O(\frac{x^{1+\varepsilon}}{T}).
$$
Where $b=1+\varepsilon$ and $1\leq T\leq x$ is a parameter to be chosen later.

Shifting the path of integration to the line $\sigma= 1/2+\varepsilon$. By using Cauchy's residue theorem, we have
\begin{eqnarray}
\label{1main}
\sum_{n\leq x}a_{K}(n)f_{\chi}(n)&=&\frac{1}{2\pi i}\left\{\int_{\frac{1}{2}+\varepsilon-iT}^{\frac{1}{2}+\varepsilon+iT}+\int_{\frac{1}{2}+\varepsilon+iT}^{b+iT}+\int_{b-iT}^{\frac{1}{2}+\varepsilon-iT}\right\}L_{K,\,f_{\chi}}(s)\frac{x^{s}}{s}ds\nonumber\\
 &  &+\text{Res}_{s=1}L_{K,\,f_{\chi}}(s)\frac{x^{s}}{s}+O(\frac{x^{1+\varepsilon}}{T})\nonumber\\
 &=&xP_{5}(\log x)+J_{1}+J_{2}+J_{3}+O(\frac{x^{1+\varepsilon}}{T})
\end{eqnarray}
Where $P_{5}(t)$ is a polynomial in $t$ with degree $5$.

Using the lemmas in section 2 about the bound for the Dirichlet series, we will estimate the $J_{i}, i=1, 2, 3$ in the following.

For $J_{1}$, we have
\begin{eqnarray}
J_{1}\ll x^{1/2+\varepsilon}+x^{1/2+\varepsilon}\int_{1}^{T}\left|M_{K,\,f_{\chi}}(1/2+\varepsilon+it)\right|t^{-1}dt
\end{eqnarray}
where we have used that $U_{1}(s)$ is absolutely convergent in the region $\Re s\geq1/2+\varepsilon$ and behaves as $O(1)$ there.

By H\"older's inequality, we have
\begin{eqnarray}
\int_{1}^{T}\left|M_{K,\,f_{\chi}}(1/2+\varepsilon+it)\right|t^{-1}dt&\ll&\log T\sup_{1\leq T_{1}\leq T}T_{1}^{-1}\cdot T_{1}^{1/6+\varepsilon}\cdot T_{1}^{1/6+\varepsilon}\times\nonumber\\
& &I_{\zeta}(T_{1})^{1/4}I_{\varphi}(T_{1})^{1/4}I_{\overline{\varphi}}(T_{1})^{1/4}I_{\chi}(T_{1})^{1/4},\nonumber
\end{eqnarray}
where we have used the formula~\eqref{Lschi}, and
\begin{eqnarray}
I_{\zeta}(T_{1})                   &:=&\int\limits_{T_{1}}^{2T_{1}}\left|\zeta(1/2+\varepsilon+it)\right|^{4}dt,\nonumber\\
I_{\varphi}(T_{1})                &:=&\int\limits_{T_{1}}^{2T_{1}}\left|L(1/2+\varepsilon+it,\varphi)\right|^{4}dt,\nonumber
\end{eqnarray}
\begin{eqnarray}
I_{\overline{\varphi}}(T_{1})&:=&\int\limits_{T_{1}}^{2T_{1}}\left|L(1/2+\varepsilon+it,\overline{\varphi})\right|^{4}dt,\nonumber\\
I_{\chi}(T_{1})                     &:=&\int\limits_{T_{1}}^{2T_{1}}\left|L(1/2+\varepsilon+it,\chi)\right|^{4}dt.\nonumber
\end{eqnarray}

Now, by using lemma~\eqref{zetaonehalf} and lemma~\eqref{Lonehalf}, we have the estimation
$$
\int_{1}^{T}\left|M_{K,\,f_{\chi}}(1/2+\varepsilon+it)\right|t^{-1}dt\ll T^{1/3+\varepsilon}.
$$
So we can deduce that
\begin{eqnarray}
\label{1J1}
J_{1}\ll x^{1/2+\varepsilon}+x^{1/2+\varepsilon}T^{1/3+\varepsilon}.
\end{eqnarray}

For $J_{2}$ and $J_{3}$, we have
\begin{eqnarray}
\label{1J23}
J_{2}+J_{3}&\ll&\sup_{1/2+\varepsilon\leq \sigma\leq 1+\varepsilon}x^{\sigma}T^{-1}\left|M_{K,\,f_{\chi}}(\sigma+iT)\right|\nonumber\\
                  &\ll&\sup_{1/2+\varepsilon\leq \sigma\leq 1+\varepsilon}x^{\sigma}T^{-1}T^{(1/3+1/3+1/3+1/3+1/3+1/3)(1-\sigma)+\varepsilon}\nonumber\\
                  &\ll&\frac{x^{1+\varepsilon}}{T}+x^{1/2+\varepsilon}T^{\varepsilon}.
\end{eqnarray}

Form formula~\eqref{1main}, \eqref{1J1} and \eqref{1J23}, we have
\begin{equation}
\label{1last}
\sum_{n\leq x}a_{K}(n)f_{\chi}(n)=xP_{5}(\log x)+O(x^{1/2+\varepsilon}T^{1/3+\varepsilon})+O(\frac{x^{1+\varepsilon}}{T}).
\end{equation}

Taking $T=x^{3/8+\varepsilon}$ in \eqref{1last}, we have
$$
\sum_{n\leq x}a_{K}(n)f_{\chi}(n)=xP_{5}(\log x)+O(x^{5/8+\varepsilon}).
$$
We complete the proof of Theorem~\ref{theorem1}.

\subsection*{Proof of Theorem~\ref{thmnonnormal}}

Now, assume that $K$ is a cubic non-normal extension over $\mathbb{Q}$. According to the lemma~\ref{nonaKn} and the formula~\eqref{fchin}, we have
\begin{equation}
\label{nonaKpfchip}
a_{K}(p)f_{\chi}(p)=1+\chi(p)+M(p)+\chi(p)M(p),
\end{equation}
where $p$ is a nature prime number.

By virtue of \eqref{nonaKpfchip},  we have the relation
$$
L_{K,\,f_{\chi}}(s)=\zeta(s)L(s, \chi)L(s, f)L(s, f\times \chi)\cdot U_{2}(s),
$$
where $L(s, f\times \chi)$ is the Rankin-Selberg convolution $L$-function of $L(s,\, f)$ and $L(s,\,\chi)$, and $U_{2}(s)$ denotes a Dirichlet series, which is absolutely convergent for $\sigma>1/2$. Therefore, the function $L_{K,\,f_{\chi}}(s)$ admits an analytic continuation into the half-plane $\sigma>1/2$, having as its only singularity a pole of order $4$ at $s=1$, because $\zeta(s)$ and each of the relative $L$-functions has a simple pole at $s=1$.

The degree of $L(s,\ f\times \chi)$ is $2$, according to the formula~\eqref{Lg}, we have
$$
\int\limits_{T}^{2T}\left|L(1/2+\varepsilon+it, f\times\chi)\right|^{2}dt\ll_{g,\varepsilon} T^{2/2+\varepsilon}.
$$

Similarly as the proof of Theorem \ref{theorem1}, using the inversion formula for Dirichlet series and the estimates above, 
we have the main term of the sum is 
$$
\text{Res}_{s=1}{L_{K,\,f_{\chi}}(s)x^{s}s^{-1}}=xP_{3}(\log x),
$$
and the error term is $O(x^{5/8+\varepsilon})$.

The proof is over.



\end{document}